\numberwithin{equation}{section}
\newtheorem{Theorem}{Theorem}[section]
\newtheorem{Corollary}[Theorem]{Corollary}
\newtheorem{Lemma}[Theorem]{Lemma}
\newtheorem{Proposition}[Theorem]{Proposition}
 { \theoremstyle{definition}

\newtheorem{Remark}[Theorem]{Remark} }
\DeclareMathOperator{\Iso}{\mathrm{Isom}}
\newcommand{\C}{\ensuremath{\mathbb{C}}}
\renewcommand{\O}{\ensuremath{\mathrm{O}}}
\renewcommand{\SS}{\ensuremath{\mathbb{S}}}
\newcommand{\bmat}{\begin{pmatrix}}
\newcommand{\emat}{\end{pmatrix}}
\newcommand{\GL}{\mathrm{GL}}
\newcommand{\1}{\mathbf{1}}
\newcommand{\e}{\mathrm{e}}
\renewcommand{\d}{{\mathrm d}}
\newcommand{\bcase}{\begin{case}}
\newcommand{\ecase}{\end{case}}
\newcommand{\bclaim}{\begin{claim}}
\newcommand{\eclaim}{\end{claim}}
\newcommand{\bstep}{\begin{step}}
\newcommand{\estep}{\end{step}}
\newcommand{\bhlem}{\begin{hlem}}
\newcommand{\ehlem}{\end{hlem}}
\newcommand{\bleer}{\begin{leer}}
\newcommand{\eleer}{\end{leer}}
\newcommand{\ol}{\overline}
\newcommand{\bs}{\begin{Proposition}}
\newcommand{\es}{\end{Proposition}}
\newcommand{\btheo}{\begin{Theorem}}
\newcommand{\etheo}{\end{Theorem}}
\newcommand{\bfolg}{\begin{Corollary}}
\newcommand{\efolg}{\end{Corollary}}
\newcommand{\blem}{\begin{Lemma}}
\newcommand{\elem}{\end{Lemma}}
\newcommand{\bnote}{\begin{note}}
\newcommand{\enote}{\end{note}}
\newcommand{\bprf}{\begin{proof}}
\newcommand{\eprf}{\end{proof}}
\newcommand{\bd}{\begin{displaymath}}
\newcommand{\ed}{\end{displaymath}}
\newcommand{\be}{\begin{eqnarray*}}
\newcommand{\ee}{\end{eqnarray*}}
\newcommand{\eeqa}{\end{eqnarray}}
\newcommand{\beqa}{\begin{eqnarray}}
\newcommand{\bi}{\begin{itemize}}
\newcommand{\ei}{\end{itemize}}
\newcommand{\bnum}{\begin{enumerate}}
\newcommand{\enum}{\end{enumerate}}
\newcommand{\beq}{\begin{equation}}
\newcommand{\eeq}{\end{equation}}
\newcommand{\rr}{\mathbb{R}}
\newcommand{\vf}{\varphi}
\newcommand{\earr}{\end{array}\]}
\newcommand{\barr}{\[\begin{array}}
\newcommand{\bvec}{\left(\begin{array}{c}}
\newcommand{\evec}{\end{array}\right)}
\newcommand{\g}{\mathfrak{g}}
\renewcommand{\a}{\mathfrak{a}}
\newcommand{\h}{\mathfrak{h}}
\newcommand{\m}{\mathfrak{m}}
\newcommand{\hol}{\mathfrak{hol}}
\newcommand{\+}{\oplus}
\newcommand{\so}{\mathfrak{so}}
\newcommand{\del}{\partial}
\newcommand{\bbem}{\begin{Remark}}
\newcommand{\ebem}{\end{Remark}}
\newcommand{\bbez}{\begin{bez}}
\newcommand{\ebez}{\end{bez}}
\newcommand{\bbsp}{\begin{bsp}}
\newcommand{\ebsp}{\end{bsp}}
\newcommand{\tM}{\widetilde{M}}
\DeclareMathOperator{\Span}{\mathrm{span}}
\newcommand{\hook}{\makebox[7pt]{\rule{6pt}{.3pt}\rule{.3pt}{5pt}}\,}
\newcommand{\R}{\mathbb{R}}
\newcommand{\belabel}[1]{\begin{equation}\label{#1}}
\begin{document}


\renewcommand{\thefootnote}{}

\newcommand{\arXivNumber}{2309.11184}

\renewcommand{\PaperNumber}{084}

\FirstPageHeading

\ShortArticleName{Pseudo-K\"{a}hler Manifolds with Essential Conformal Transformations}

\ArticleName{Compact Locally Conformally Pseudo-K\"{a}hler\\ Manifolds with Essential Conformal Transformations\footnote{This paper is a~contribution to the Special Issue on Global Analysis on Manifolds in honor of Christian B\"ar for his 60th birthday. The~full collection is available at \href{https://www.emis.de/journals/SIGMA/Baer.html}{https://www.emis.de/journals/SIGMA/Baer.html}}}

\Author{Vicente CORT\'ES~$^{\rm a}$ and Thomas LEISTNER~$^{\rm b}$}

\AuthorNameForHeading{V.~Cort\'es and T.~Leistner}

\Address{$^{\rm a)}$~Department Mathematik, University of Hamburg, Bundesstra{\ss}e 55, 20146 Hamburg, Germany}
\EmailD{\href{mailto:vicente.cortes@uni-hamburg.de}{vicente.cortes@uni-hamburg.de}}

\Address{$^{\rm b)}$~School of Computer \& Mathematical Sciences, University of Adelaide, SA 5005, Australia}
\EmailD{\href{mailto:thomas.leistner@adelaide.edu.au}{thomas.leistner@adelaide.edu.au}}

\ArticleDates{Received September 21, 2023, in final form September 09, 2024; Published online September 21, 2024}

\Abstract{A conformal transformation of a semi-Riemannian manifold is essential if there is no conformally equivalent metric for which it is an isometry. For Riemannian manifolds the existence of an essential conformal transformation forces the manifold to be conformally flat. This is false for pseudo-Riemannian manifolds, however compact examples of conformally curved manifolds with essential conformal transformation are scarce. Here we give examples of compact conformal manifolds in signature $(4n+2k,4n+2\ell)$ with essential conformal transformations that are locally conformally pseudo-K\"{a}hler and not conformally flat, where $n\ge 1$, $k, \ell \ge 0$. The corresponding local pseudo-K\"ahler metrics obtained by a local conformal rescaling are Ricci-flat.}

\Keywords{pseudo-Riemannian manifolds; essential conformal transformations; K\"{a}hler metrics; symmetric spaces}

\Classification{53C50; 53C35; 53C18; 53C29}

\begin{flushright}
\begin{minipage}{60mm}
\em Dedicated to Christian B\"{a}r\\
on the occasion of his 60th birthday
\end{minipage}
\end{flushright}

\renewcommand{\thefootnote}{\arabic{footnote}}
\setcounter{footnote}{0}

\section{Introduction}

A {\em conformal diffeomorphism} between semi-Riemannian manifolds $(M,g)$ and $(N,h)$ is a diffeomorphism
$\phi\colon M\to N$ such that
\[\phi^*h=\e^{2\vf}g,\]
for a smooth function $\vf\in C^\infty(M)$. A {\em conformal transformation} is a conformal diffeomorphism from $(M,g)$ to itself.
The conformal transformations form a group and we call a group~$G$ of conformal transformations {\em essential} if there
if there is no metric $\hat g$ in the conformal class of $g$ such that~$G$ is a group of isometries for $\hat g$. Otherwise~$G$ is called {\em inessential}.
A single conformal transformation $\phi$ is called {\em essential} if the group generated by $\phi$ is essential, and inessential otherwise.
A {\em homothetic transformation} or just a {\em homothety} is a conformal transformation for which the function $\vf$ is constant $\vf\equiv\lambda\in \R$.
If $\lambda\not=0$, we say that $\phi$ is a {\em proper homothety}.

The Lichnerowicz conjecture states that if $(M,g)$ is a Riemannian manifold with an essential conformal transformation, then $(M,g)$ is either conformally diffeomorphic to $\R^n$ with the flat metric or to the sphere $\SS^n$ with the round metric. It was proved in a series of papers by Lelong-Ferrand and Obata (see \cite{Lelong-Ferrand71} and \cite{Obata71} for compact manifolds, \cite{Ferrand96} for non-compact ones) with contributions by \cite{Alekseevskii72}. It is clear that Euclidean space and the round sphere have essential conformal transformations: the conformal transformations of $\R^n$ are homotheties, whereas the conformal transformations of $\SS^n$ are induced by the linear isometries of Minkowski space $\R^{1,n+1}$. In fact, identifying the sphere $\SS^n$ with the projectivisation $\bigl\{ [x] \in \mathbb{R}P^{n+1} \mid x \text{ is null in }\R^{1,n+1}\bigr\}$ of the null cone, the conformal group of $\SS^n$ is precisely
$\mathrm{PO}(1,n+1)=\O(1,n+1)/\{\pm 1\}$. In both cases there are conformal transformations with a fixed point, which are essential (see~\mbox{\cite[Proposition~2.5]{LeistnerTeisseire22}} for a proof that non-trivial homotheties with a fixed point are essential -- we will use this fact below).
 It is remarkable that in the non-compact case, geodesic completeness does not have to be assumed for the statement to hold. This can be illustrated with Euclidean space $\R^n$. Here the homotheties $x\mapsto\e^s x$, for $s\not=0$, have a fixed point and are therefore essential. Removing the origin allows to define the metric \smash{$\tfrac{1}{\|x\|^2}g_{\mathrm{Euclid}}$} on $\R^n\setminus\{0\}$ for which these homotheties are isometries. By the geodesic incompleteness of $\R^n\setminus\{0\}$ one gains metrics that cause the loss of essential transformations.

 It is a natural question to which extent the result of Ferrand and Obata generalises to semi-Riemannian manifolds with {\em indefinite} metrics. It was clear from early on that it does not generalise to the non-compact context as there are plenty of non-compact Lorentzian manifolds with essential conformal transformations that are not conformally flat \cite{Alekseevski85,Podoksenov89,Podoksenov92}, and this holds beyond Lorentzian signature. It is much more difficult to find compact examples, but Frances in \cite{frances05} constructed compact Lorentzian manifolds with essential conformal transformations that are not conformally diffeomorphic to the homogeneous model of Lorentzian conformal geometry, the Einstein universe (see \cite{Frances08} for a survey). Recall that
 the Einstein universe of dimension $n$ is the coset space $\mathrm{PO}(2,n)/\mathrm{PO}(2,n)_L$ endowed with its canonical conformal structure, where $\mathrm{PO}(2,n)_L$ denotes the
 stabilizer of a null line $L\subset \R^{1,n+1}$ considered as a point $L\in \R P^{n+1}$. The group of conformal transformations
 of this manifold is $\mathrm{PO}(2,n)$ and its Weyl tensor vanishes.

These Lorentzian examples generalise to other signatures. Even though the examples found by Frances are not conformally diffeomorphic to the homogeneous model, they are conformally flat, i.e.,~have vanishing Weyl tensor. This led to a new {\em generalised Lichnerowicz conjecture}: a~compact semi-Riemannian manifold with essential conformal transformations is conformally flat. In \cite{frances12}, again Frances constructed counterexamples to this conjecture in all signatures {\em except Lorentzian}. In Lorentzian signature the generalised Lichnerowicz conjecture is known as {\em Lorentzian Lichnerowicz conjecture} and is still open, although substantial progress has been made recently in a series of papers by Frances, Melnick and Pecastaing \cite{FrancesMelnick21,MelnickPecastaing21,Pecastaing23,Pecastaing18}.

In the present article, we focus on signatures beyond Lorentzian and in particular on neutral signature. Our main result is the construction of Ricci-flat pseudo-K\"{a}hler symmetric spaces in signature $(4n,4n)$ that admit essential holomorphic homotheties and, based on this, we construct examples of compact locally conformally pseudo-K\"ahler manifolds $M_{a,b}$ with an essential conformal group that are locally conformally Ricci-flat, but not conformally flat. The latter result is stated in detail in Theorem~\ref{main:thm}. In addition, we consider real versions of these constructions in signature $(2n,2n)$ (see, e.g.,~Theorem~\ref{mainthm2}), and more generally $(2n+k,2n+\ell)$ that yield compact {conformal} manifolds with essential conformal transformations. The latter are generalisations of the examples constructed by Frances in \cite{frances12} in signatures $(2+k,2+\ell)$.

\section[A Hermitian symmetric space in signature (4n,4n) with compact conformal quotients]{A Hermitian symmetric space in signature $\boldsymbol{(4n,4n)}$ \\ with compact conformal quotients}
 In the following, we use the index conventions
\[A,B,C,\ldots \in \{1,\ldots, 4n\},\qquad a,b,c,\ldots\in \{1\ldots ,2n\},\qquad i,j,k,\ell,\ldots\in \{1,\ldots, n\}.\] We use the Einstein summation convention for these ranges of indices.

Let $(\sigma_{ij})$ be a symmetric complex $n\times n$ matrix and define the quadratic polynomial
\[\sigma\bigl(z^1,\ldots, z^{2n}\bigr):= \sigma_{ij}z^{i}z^{j+n},\]
on $\C^{2n}$.
Using $\sigma$, we define
 $f \colon \C^{4n}\to \rr$ as
\begin{align}
f\bigl(z^1,\ldots , z^{4n}\bigr)
&{}=
\delta_{ab} z^a\ol{z}^{b+2n}+
\sigma\bigl(z^1,\ldots, z^{2n}\bigr)\overline{\sigma\bigl(z^1,\ldots, z^{2n}\bigr)}
\nonumber\\
&{}=
\delta_{ab} z^a\ol{z}^{b+2n}
+
\sigma_{ij}\bar{\sigma}_{k\ell}\bigl(
 z^{i}z^{j+n}\ol{z}^{k} \ol{z}^{\ell+n}\bigr).\label{potential}
\end{align}
As K\"{a}hler potential, this function defines an indefinite K\"ahler metric of signature $(4n,4n)$ on $\C^{4n}=\rr^{8n}$ by
\begin{gather}\label{metric}
g= h_{AB} \,
 \d z^A \cdot \d \ol{z}^B,\qquad \text{where} \quad h_{AB}=\frac{\partial^2 f}{\partial z^A\partial \ol{z}^B},
 \end{gather}
with K\"{a}hler form
 \begin{align*}
\omega
=
\frac{\mathrm{i}}{2} h_{AB}\,
 \d z^A \wedge \d \ol{z}^B.
\end{align*}
The only non-vanishing terms in $h_{AB}$ are the following:
\begin{gather}
h_{a\, b+2n} = h_{a+2n\, b} = \delta_{ab},
\nonumber\\
h_{j\ell} =
\sigma_{ij}\bar{\sigma}_{k\ell}\,
 z^{i+n}\ol{z}^{k+n},
\nonumber\\
h_{j\, \ell+n} =
\sigma_{ij}\bar{\sigma}_{k\ell}\,
 z^{i+n}\ol{z}^{k},
\nonumber\\
h_{j+n\,\ell} =
\sigma_{ij}\bar{\sigma}_{k\ell}\,
 z^{i}\ol{z}^{k+n},
\nonumber\\
h_{j+n\, \ell+n} =
\sigma_{ij}\bar{\sigma}_{k\ell}\,
 z^{i}\ol{z}^{k},
\label{metriccoeff}
\end{gather}
so that {$(h_{AB})$} and its inverse are of the form
\[
{\bigl(h_{AB}\bigr)}=\begin{pmatrix} h_{ab} &\1_{2n} \\ \1_{2n} & 0\end{pmatrix},
\qquad
{\bigl(h^{AB}\bigr)}=\begin{pmatrix} 0 &\1_{2n} \\ \1_{2n} & -h_{ab}\end{pmatrix}.
\]

Without loss of generality, we can assume that the symmetric matrix {$(\sigma_{ij})$} is diagonalised, as the following lemma shows.
\begin{Lemma}\label{diag:lem}
For any complex symmetric $n\times n$ matrix $\sigma=(\sigma_{ij})$, the pseudo-Riemannian manifold $\bigl(\R^{8n},g\bigr)$ is isometric to a pseudo-Riemannian manifold $\bigl(\R^{8n},\hat g\bigr)$ of the same type defined by a diagonal matrix $(\hat\sigma_{ij})$.
\end{Lemma}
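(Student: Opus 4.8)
The plan is to exhibit an explicit complex-linear isometry between the two K\"ahler manifolds. Recall that if $g_f$ denotes the K\"ahler metric whose potential is $f$, then for any biholomorphism $\Phi$ one has $\Phi^*g_f=g_{f\circ\Phi}$, since a holomorphic map commutes with $\partial$ and $\bar\partial$; moreover two potentials differing by a pluriharmonic function define the same metric. Hence it suffices to produce an invertible $\C$-linear map $\Phi(z)=Mz$ of $\C^{4n}$ such that $f\circ\Phi$ agrees, up to a pluriharmonic summand, with the potential $\hat f$ built from a diagonal matrix $\hat\sigma$. First I would reorganise the potential~\re{potential} to isolate the role of $\sigma$.

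Group the coordinates into four blocks $w=\bigl(z^1,\dots,z^n\bigr)$, $v=\bigl(z^{n+1},\dots,z^{2n}\bigr)$, $p=\bigl(z^{2n+1},\dots,z^{3n}\bigr)$, $q=\bigl(z^{3n+1},\dots,z^{4n}\bigr)$, so that the quadratic polynomial becomes the bilinear form $\sigma(z)=w^\top\sigma v$ with $\sigma=(\sigma_{ij})$, while the first summand of~\re{potential} is $\delta_{ab}z^a\ol z^{b+2n}=w^\top\ol p+v^\top\ol q$. Thus
\[
f = w^\top\ol p + v^\top\ol q + \bigl|\,w^\top\sigma v\,\bigr|^2 .
\]
In this form the two ``coupling'' terms $w^\top\ol p$ and $v^\top\ol q$ are the source of the off-diagonal blocks $\1_{2n}$ of $\bigl(h_{AB}\bigr)$, and all the dependence on $\sigma$ sits in the quadratic factor.

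The key tool is the Autonne--Takagi factorisation: every complex symmetric matrix admits a decomposition $\sigma=U\hat\sigma U^\top$ with $U$ unitary and $\hat\sigma$ a nonnegative real diagonal matrix, its entries being the singular values of $\sigma$. I would then let $M$ act by the \emph{same} matrix $\ol U$ on each of the four blocks, i.e., $w=\ol U\,\wt w$, $v=\ol U\,\wt v$, $p=\ol U\,\wt p$, $q=\ol U\,\wt q$. Using unitarity in the two equivalent forms $\ol U^\top U=\1_n$ and $U^\top\ol U=\1_n$, the quadratic factor transforms by the congruence $\ol U^\top\sigma\,\ol U=\hat\sigma$, so that $w^\top\sigma v=\wt w^\top\hat\sigma\wt v$, while each coupling term is preserved: $w^\top\ol p=\wt w^\top\bigl(\ol U^\top U\bigr)\ol{\wt p}=\wt w^\top\ol{\wt p}$ and likewise $v^\top\ol q=\wt v^\top\ol{\wt q}$. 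Consequently $f\circ\Phi=\hat f$ on the nose, where $\hat f$ is the potential attached to the diagonal matrix $\hat\sigma$, and therefore $\Phi$ is an isometry between $\bigl(\R^{8n},\hat g\bigr)$ and $\bigl(\R^{8n},g\bigr)$.

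The only real subtlety is that one must diagonalise $\sigma$ \emph{without} destroying the metric. A generic congruence $A^\top\sigma B$ would reduce $\sigma$ to diagonal form but would not respect the Hermitian coupling terms and hence would not be an isometry; what makes the argument work is precisely that Takagi's theorem lets a single unitary $\ol U$ simultaneously effect the bilinear congruence $\ol U^\top\sigma\,\ol U=\hat\sigma$ and, through $\ol U^\top U=\1_n$, preserve $w^\top\ol p$ and $v^\top\ol q$. I expect verifying these two compatibilities, rather than the existence of the factorisation (which is classical), to be the main point to check carefully.
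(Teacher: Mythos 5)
Your proof is correct, and it follows the same overall strategy as the paper's proof --- an explicit complex-linear change of coordinates pulling the potential $f$ back to the diagonal potential $\hat f$ --- but with a genuinely different choice of diagonalising ingredient. The paper takes an \emph{arbitrary} $Q\in\GL(n,\C)$ with $Q^\top\sigma Q$ diagonal (such $Q$ exists for every complex symmetric matrix by the classical congruence diagonalisation of bilinear forms, no unitarity needed) and, since a non-unitary $Q$ would distort the Hermitian coupling, compensates by acting with $P=\bigl(\ol{Q}^{-1}\bigr)^\top$ on the blocks $p,q$; the identity $Q^\top\ol{P}=\1_n$ then preserves $w^\top\ol{p}$ and $v^\top\ol{q}$, which is exactly the compatibility you isolate as the crux (the paper records it as $\phi_Q\in\O(4n,4n)$). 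Your Autonne--Takagi route is the special case in which $Q=\ol{U}$ is unitary: then $P=\bigl(\ol{Q}^{-1}\bigr)^\top=Q$, so a single matrix serves on all four blocks and no compensation is required. The paper's version buys flexibility and elementarity --- it avoids the unitary refinement of the factorisation, and the same $P=\bigl(\ol{Q}^{-1}\bigr)^\top$ device transfers verbatim to the real metrics of Proposition~\ref{realex:prop}, where $(\sigma_{ij})$ is diagonalised via $\GL(2n,\R)\subset\O(2n,2n)$. Your version buys a canonical normal form: $\hat\sigma$ is real diagonal with the singular values of $\sigma$ as entries (all non-zero when $\sigma$ is non-degenerate), slightly more than the lemma demands and convenient for the holonomy argument in Proposition~\ref{kaehlerprop}, which assumes $(\sigma_{ij})$ diagonal with non-vanishing entries. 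Both of the verifications you flag (the congruence $\ol{U}^\top\sigma\,\ol{U}=\hat\sigma$ and $\ol{U}^\top U=\1_n$) check out; note that you in fact obtain $f\circ\Phi=\hat f$ on the nose, so the pluriharmonic leeway you build in is never used --- the paper likewise gets exact equality of potentials.
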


\begin{proof}
Assume that $Q\in \GL(n,\C)$ such that $\hat \sigma=Q^\top \sigma Q$ is diagonal. Let $P=\bigl(\overline{Q}^{-1}\bigr)^\top$. The real coordinate transformation $\phi_Q$ induced by complex linear transformation
\[ \hat{z}^{i}:=Q^i{}_k z^{k},\qquad \hat{z}^{i+n}:=Q^i{}_k z^{k+n},\]
and
\[ \hat{z}^{i+2n}:=P^i{}_k z^{k+2n},\qquad {\hat{z}^{i+3n}}:=P^i{}_k z^{k+3n},\]
maps $\R^{8n}$ to $\R^{8n}$ and pulls back the K\"{a}hler potential $f$ to the K\"{a}hler potential $\hat f= f\circ \phi_Q$,
\[
\hat f\bigl({z}^1,\ldots , {z}^{4n}\bigr)
=\delta_{ab} {z}^a\ol{{z}}^{b+2n}+
\hat\sigma_{ij}\bar{\hat\sigma}_{k\ell}
 \bigl(
 {z}^{i}{z}^{j+n}\ol{{z}}^{k} \ol{{z}}^{\ell+n}
 \bigr).
 \]
 As a consequence, the corresponding pseudo-K\"{a}hler metrics are isometric, $\hat g=\phi_Q^*g$. Note also that the quadratic term $\delta_{ab} {z}^a\ol{{z}}^{b+2n}$ is invariant under $\phi_Q$, i.e.,~$\phi_Q\in \O(4n,4n)$.
\end{proof}

We will now compute the Christoffel symbols for the metric (\ref{metric}) defined by the K\"{a}hler potential~(\ref{potential}). From (\ref{metriccoeff}), the metric is given as
\begin{gather}
g =\delta_{ab} \bigl(\d z^a\d \ol{z}^{b+2n} + \d z^{a+2n}\d \ol{z}^{b}\bigr)
\nonumber\\ \hphantom{g=}{}
 +
\sigma_{ij}\bar{\sigma}_{k\ell}
 \bigl(
 z^{i+n}\ol{z}^{k+n}\d z^{j}\d\ol{z}^{\ell}
 +
 z^{i+n}\ol{z}^{k}\d z^{j}\d\ol{z}^{\ell+n}
 +
 z^{i}\ol{z}^{k+n}\d z^{j+n}\d\ol{z}^{\ell}
 +
 z^{i}\ol{z}^{k}\d z^{j+n}\d\ol{z}^{\ell+n}\bigr)
\nonumber\\ \hphantom{g}{}
 =
\delta_{ab} \bigl(\d z^a\d \ol{z}^{b+2n} + \d z^{a+2n}\d \ol{z}^{b}\bigr)
+
 \sigma_{ij} \d\bigl(z^{i} z^{j+n}\bigr)
 \overline{
 \sigma_{k\ell} \d\bigl(z^{k} z^{\ell+n}\bigr)}
.\label{metriclong}
\end{gather}
In the following, we will write $\partial_A$ for $\frac{\partial}{\partial z^A}$ and $\partial_{\overline{A}}$ for $\frac{\partial}{\partial \overline{z}^A}$.
The {(essential)} Christoffel symbols of a K\"{a}hler metric are given by \cite[Section~IX.5]{ko-no2}
\[\Gamma^C_{AB} =h^{DC}\partial_Ah_{BD}.\]
\big(In fact, \smash{$\Gamma^{\bar{C}}_{\bar{A}\bar{B}}= \overline{\bigl(\Gamma^C_{AB}\bigr)}$} and the mixed Christoffel symbols are zero.\big)
In our situation, this shows~that
\[ \Gamma^A_{a+2n\ B}=0,\]
so that the holomorphic vector fields $\del_{a+2n}$, $a=1,\ldots , 2n$, are parallel on $\bigl(\R^{8n}, g\bigr)$. As~a~consequence, the real vector fields
$\del_{a+2n}+\del_{\overline{a+2n}}$ and $\mathrm{i}{\bigl(\del_{a+2n}-\del_{\overline{a+2n}}\bigr)}$ are parallel as well.

Similarly,
one
checks that
\[\Gamma^{c}_{AB} =0.\]
Furthermore, it is easy to see that
\[
\Gamma^{c+2n}_{i j}= \partial_{i}h_{j\, c}=0, \]
as well as
\[
\Gamma^{c+2n}_{i+n\,j+n}=0.
\]
A similar computation shows that the only non-vanishing Christoffel symbols are
\[
\Gamma^{\ell+2n}_{i+n\, j}
 =
 \partial_{i+n} h_{ j\, \ell}
 =
 \partial_{i+n} \bigl(
 \sigma_{mj}\bar{\sigma}_{k\ell}\,
 z^{m+n}\ol{z}^{k+n}\bigr)
 =
 \sigma_{ij}\bar{\sigma}_{k\ell}\,
\ol{z}^{k+n}
\]
and
 \[ \Gamma^{\ell+3n}_{i+n\, j}
 =
 \partial_{i+n} h_{ j\, \ell+n}
 =
 \partial_{i+n} \bigl(
 \sigma_{mj}\bar{\sigma}_{k\ell}\,
 z^{m+n}\ol{z}^{k} \bigr)
 =
 \sigma_{ij}\bar{\sigma}_{k\ell}\,
\ol{z}^{k} .\]
To summarise, for all $b\in \{1, \ldots, 2n\}$ and $A,B\in \{1, \ldots, 4n\}$, we have
\begin{equation}
\label{LC1}
\nabla_{\partial_A}\partial_{b+2n}=0,\qquad
\nabla_{\partial_A}\partial_b \in \Gamma (\mathrm{span} (\partial_{a+2n})_{a=1,\ldots, 2n}).\end{equation}
The curvature tensor is determined by ${\del_{a+2n}\hook R=0}$ and the following equations:
\begin{alignat*}{3}
&R\bigl(\del_{\overline{i}},\del_{j}\bigr)\del_{k} = 0,\qquad&& R\bigl(\del_{\overline{i}},\del_{j}\bigr)\del_{k+n} = \sigma_{jk}\sum_{\ell}\bar{\sigma}_{i\ell} \del_{\ell+3n},&
\\
&R\bigl(\del_{\overline{i+n}},\del_{j+n}\bigr)\del_{k} = \sigma_{jk}\sum_{\ell}\bar{\sigma}_{i\ell} \del_{\ell+2n},\qquad&& R\bigl(\del_{\overline{i+n}},\del_{j+n}\bigr)\del_{k+n} = 0,&
\\
&R\bigl(\del_{\overline{i}},\del_{j+n}\bigr)\del_{k} = \sigma_{jk}\sum_{\ell}\bar{\sigma}_{i\ell} \del_{\ell+3n},\qquad&& R\bigl(\del_{\overline{i}},\del_{j+n}\bigr)\del_{k+n} = 0,&
\\
&R\bigl(\del_{\overline{i+n}},\del_{j}\bigr)\del_{k} = 0,\qquad&& R\bigl(\del_{\overline{i+n}},\del_{j}\bigr)\del_{k+n} =
\sigma_{jk}\sum_{\ell}\bar{\sigma}_{i\ell} \del_{\ell+2n}.&
\end{alignat*}
Using conjugation and the skew symmetry of $R$, we get
\begin{alignat*}{3}
&R\bigl(\del_{\overline{i}},\del_{j}\bigr)\del_{\overline{k}} = 0,\qquad&& R\bigl(\del_{\overline{i}},\del_{j}\bigr)\del_{\overline{k+n}} = -\bar{\sigma}_{ik}\sum_{\ell}{\sigma}_{j\ell} \del_{\overline{\ell+3n}},&
\\
&R\bigl(\del_{\overline{i+n}},\del_{j+n}\bigr)\del_{\overline{k}} = -\bar{\sigma}_{ik}\sum_{\ell}{\sigma}_{j\ell} \del_{\overline{\ell+2n}}, \qquad&& R\bigl(\del_{\overline{i+n}},\del_{j+n}\bigr)\del_{\overline{k+n}} = 0,&
\\
&R\bigl(\del_{\overline{i}},\del_{j+n}\bigr)\del_{\overline{k}} = 0,\qquad&& R\bigl(\del_{\overline{i}},\del_{j+n}\bigr)\del_{\overline{k+n}} =
-\bar{\sigma}_{ik}\sum_{\ell}{\sigma}_{j\ell} \del_{\overline{\ell+2n}},&
\\
&R\bigl(\del_{\overline{i+n}},\del_{j}\bigr)\del_{\overline{k}} =
-\bar{\sigma}_{ik}\sum_{\ell}{\sigma}_{j\ell} \del_{\overline{\ell+3n}},\qquad&& R\bigl(\del_{\overline{i+n}},\del_{j}\bigr)\del_{\overline{k+n}} =0.&
\end{alignat*}
With this information at hand, we obtain the following proposition, for which we recall
that a~semi-Riemannian manifold is indecomposable if it does not split as a product, not even locally.

\begin{Proposition}\label{kaehlerprop}
The metric $g$ in \eqref{metric} defined by the K\"{a}hler potential \eqref{potential} is locally symmetric, K\"{a}hler of neutral signature, Ricci-flat, but not flat and hence not conformally flat. Moreover, the semi-Riemannian manifold $\bigl(\R^{8n},g\bigr)$ is a symmetric space.
\big(More precisely, $\bigl(\C^4=\R^{8n},g\bigr)$ is a Hermitian symmetric space.\big) If $(\sigma_{ij})$ is non-degenerate, then
$\bigl(\R^{8n},g\bigr)$ is indecomposable.
\end{Proposition}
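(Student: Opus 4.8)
The plan is to treat the six assertions in turn, leaning on the curvature data already assembled. That $g$ is Kähler of signature $(4n,4n)$ is immediate from its construction in \eqref{metric} via the Kähler potential \eqref{potential}. For Ricci-flatness I would avoid re-contracting the curvature and instead use the Kähler identity $\mathrm{Ric}=-\mathrm{i}\,\partial\bar\partial\log\det(h_{A\bar B})$: from the block form of $(h_{AB})$ the lower-right block vanishes and the off-diagonal blocks are $\1_{2n}$, so $\det(h_{AB})\equiv 1$, the Ricci form vanishes identically, and $g$ is Ricci-flat.

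For local symmetry I would verify $\nabla R=0$ directly in the coordinate frame, using two structural facts recorded above: the curvature components are constant (the $\sigma_{ij}$ are constants), and by \eqref{LC1} every nonzero holomorphic Christoffel symbol $\Gamma^{C}_{AB}$ has upper index in the parallel distribution $\mathcal P=\mathrm{span}(\partial_{a+2n})$, while $R$ maps into $\mathcal P$ and is annihilated by it, $\partial_{a+2n}\hook R=0$ and $R(\cdot,\cdot)\partial_{a+2n}=0$. Expanding $(\nabla_{\partial_A}R)^{E}{}_{BCD}$, the derivative term vanishes by constancy; the term $\Gamma^{E}_{AF}R^{F}{}_{BCD}$ vanishes because a nonzero $R^{F}{}_{BCD}$ forces $F\in\mathcal P$ and then $\Gamma^{E}_{AF}=0$; and each of the three terms differentiating a lower slot vanishes because a nonzero $\Gamma^{F}_{A\bullet}$ forces $F\in\mathcal P$, on which $R$ acts trivially. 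Hence $\nabla R=0$. The curvature is visibly nonzero once $\sigma\neq 0$ (e.g.\ $R(\partial_{\bar i},\partial_j)\partial_{k+n}=\sigma_{jk}\sum_\ell\bar\sigma_{i\ell}\partial_{\ell+3n}$), so $g$ is not flat; being Ricci-flat its Schouten tensor vanishes, so its Weyl tensor equals its curvature tensor, which is therefore nonzero in dimension $8n\ge 8$, and $g$ is not conformally flat.

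To upgrade local symmetry to a global symmetric space I would prove geodesic completeness and then invoke the standard theorem that a connected, simply connected, complete locally symmetric semi-Riemannian manifold is globally symmetric, $\R^{8n}$ being simply connected. Completeness follows from the triangular form of the geodesic equations: since $\Gamma^{c}_{AB}=0$ for $c\in\{1,\dots,2n\}$, the coordinates $z^1,\dots,z^{2n}$ are affine in the parameter, and the remaining $z^{2n+1},\dots,z^{4n}$ are then obtained by integrating polynomials, so every geodesic extends to all of $\R$. The space is Hermitian symmetric because the geodesic symmetry at the origin is $z\mapsto -z$, which preserves $f$ (each monomial of $f$ has even total degree in $z,\bar z$) and is holomorphic; since $J$ is parallel, the symmetry at every point is holomorphic.

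Finally, indecomposability for nondegenerate $\sigma$. By the de Rham--Wu decomposition it suffices to show the holonomy representation on $T_pM$ is weakly irreducible, i.e.\ has no proper nondegenerate invariant subspace; for a symmetric space the holonomy algebra $\mathfrak{hol}\subset\so(T_pM)$ is spanned by the operators $R(X,Y)$. A nondegenerate invariant $W$ would give $T_pM=W\oplus W^\perp$ with both summands invariant, which (splitting along $W,W^\perp$ both the common image and, for nondegenerate $\sigma$, the common kernel of all $R(X,Y)$, each of which equals the parallel totally isotropic subspace $\mathcal P$) forces $\mathcal P=(\mathcal P\cap W)\oplus(\mathcal P\cap W^\perp)$ and makes the curvature block-decompose. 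After diagonalising $\sigma$ by Lemma~\ref{diag:lem}, the surviving components read $R(\partial_{\bar i},\partial_j)\partial_{j+n}=\sigma_j\bar\sigma_i\,\partial_{i+3n}$ together with their analogues, and these are nonzero for \emph{every} pair $(i,j)$ precisely because all $\sigma_i\neq 0$; this all-to-all coupling of the index blocks cannot respect any nontrivial block-decomposition, so $W\in\{0,T_pM\}$. I expect this weak-irreducibility step to be the main obstacle, since invariant subspaces need not be aligned with the coordinate index blocks, so the argument must turn the explicit curvature into a coordinate-free statement; note that the degenerate case is exactly where a block with $\sigma_i=0$ splits off a flat factor and the space becomes decomposable.
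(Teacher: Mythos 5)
Most of your proposal coincides step for step with the paper's own proof: Ricci-flatness from the block form of $(h_{AB})$ giving $\det (h_{AB})\equiv 1$ (what you write as $\mathrm{Ric}$ is really the Ricci form, but the computation is the same); $\nabla R=0$ from the constancy of the curvature components combined with the structure \eqref{LC1} of the Christoffel symbols; geodesic completeness from the triangular geodesic system (affine $\gamma^a$, then polynomial right-hand sides for $\gamma^{a+2n}$); and non-conformal-flatness via Ricci-flat $\Rightarrow$ Weyl $=$ Riemann $\neq 0$. Your explicit check that $z\mapsto -z$ is a holomorphic isometry preserving $f$ is a legitimate way to substantiate the parenthetical ``Hermitian symmetric'' claim, which the paper does not argue separately.

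The genuine gap is the final step of the indecomposability argument, and you flag it yourself. Your reduction is correct as far as it goes: for non-degenerate $(\sigma_{ij})$ the common kernel and the common image of all $R(X,Y)$ both equal the totally isotropic parallel subspace $U'=\mathrm{span}\{\partial_{a+2n}\}$, and both split along any invariant orthogonal decomposition $W\oplus W^{\perp}$, so $U'=(U'\cap W)\oplus \bigl(U'\cap W^{\perp}\bigr)$. But from there you jump to ``this all-to-all coupling of the index blocks cannot respect any nontrivial block-decomposition'', which only rules out invariant subspaces aligned with the coordinate index blocks; an invariant $W$ can sit diagonally across them, and nothing in your argument excludes that. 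The missing ingredient is precisely the paper's key observation, a coordinate-free statement about vectors transverse to $U'$: after diagonalising $\sigma$ via Lemma~\ref{diag:lem}, the holonomy algebra $\mathfrak{hol}$ consists exactly of the block matrices $\begin{pmatrix}0&0\\ \ast&0\end{pmatrix}$ with $\ast$ an \emph{arbitrary} skew-Hermitian $2n\times 2n$ matrix, so for any $v\notin U'$ the orbit $\mathfrak{hol}\cdot v$ sweeps out (at least a real hyperplane of) $U'$ --- this is where the transitivity of the unitary group on spheres enters. Consequently every $\mathfrak{hol}$-invariant subspace is comparable to $U'$: it is either contained in $U'$ or contains essentially all of $U'$. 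With that lemma your splitting closes the proof at once: if $W\subset U'$ then $W$ is totally isotropic (the lower-right block of $(h_{AB})$ vanishes), contradicting non-degeneracy unless $W=0$; and if both $W\not\subset U'$ and $W^{\perp}\not\subset U'$, then $U'\cap W$ and $U'\cap W^{\perp}$ each have real dimension at least $4n-1$, while they intersect trivially inside the $4n$-dimensional space $U'$, impossible for $n\geq 1$. Without some version of the ``$\mathfrak{hol}\cdot v$ fills $U'$'' statement, the weak-irreducibility step does not close, so as written the proposal does not prove indecomposability.
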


\bprf
Using the fact~\cite[p.~90]{Moroianu07} that
\[ \mathrm{Ric}(\partial_A,\partial_{\bar B}) = -\partial_A\partial_{\bar B}\log \det (g(\partial_C,\partial_{\bar D}))),\]
we see that since the determinant of $h_{AB}$ is equal to $1$, $g$ is Ricci-flat.
With the above formulas~(\ref{LC1}) for the Levi-Civita connection, we have that
\begin{equation*}
\nabla_{\partial_A}\partial_B \in \Gamma (\mathrm{span} (\partial_{a+2n})_{a=1,\ldots, 2n}),\qquad \nabla_{\partial_{\bar{A}}}\partial_{\bar{B}} \in \Gamma \bigl(\mathrm{span} \bigl(\partial_{\overline{a+2n}}\bigr)_{a=1,\ldots, 2n}\bigr).\end{equation*}
Together with the fact that the components of the curvature tensor in the coordinates are constant, this implies that the curvature tensor is parallel, so that $g$ is locally symmetric,
$\nabla R=0$.

In order to show that $\R^{8n}$ with the metric $g$ is a globally symmetric space, we have to show that it is geodesically complete.
The geodesic equations are (we only work with the unbarred components, as the barred ones follow by complex conjugation)
\begin{gather*}
0 = \ddot{\gamma}^{a},\qquad\text{ i.e.,}\quad \gamma^a(t)=p^at+q^a,
\\
0 =
 \ddot{\gamma}^{\ell+2n} {+2}
 \sigma_{ij}\bar{\sigma}_{k\ell}\,
\ol{\gamma}^{k+n} \dot\gamma^{i+n}{\dot\gamma^{j}}
=
\ddot{\gamma}^{\ell+2n} {+2}
 \sigma_{ij}\bar{\sigma}_{k\ell}\,
\bigl(p^{k+n} t+q^{k+n}\bigr)
{p^{i+n}p^{j}},
\\
0 =
 \ddot{\gamma}^{\ell+3n} {+2}
 \sigma_{ij}\bar{\sigma}_{k\ell}\,
\ol{\gamma}^{k} \dot\gamma^{i+n}{\dot\gamma^{j}}
=
 \ddot{\gamma}^{\ell+3n} {+2}
 \sigma_{ij}\bar{\sigma}_{k\ell}\,
\bigl(p^{k} t+q^{k}\bigr){p^{i+n}p^{j}}.
\end{gather*}
Hence the $\gamma^{a+2n}(t) $ are cubic polynomials in $t$ and hence
 defined for all $t\in \R$.
As a consequence, $\bigl(\R^{8n},g\bigr)$ is geodesically complete and a symmetric space of signature $(4n,4n)$.

Now we prove that the holonomy algebra of $\bigl(\R^{8n},g\bigr)$ is indecomposable if $(\sigma_{ij})$ is non-degenerate.
Indecomposability means here that there is no decomposition of the (real) tangent space as a proper sum of two complementary non-degenerate invariant subspaces. This is equivalent to the geometric indecomposabilty formulated in the statement of the proposition.

In virtue of Lemma \ref{diag:lem}, we can assume that $(\sigma_{ij})$ is diagonal
with diagonal elements $\lambda_i\neq 0$.
The holonomy algebra is spanned by the following endomorphisms:
\begin{gather*}
R\bigl(\partial_i + \partial_{\bar i},\partial_j+\partial_{\bar j}\bigr) = R\bigl(\partial_{\bar i}, \partial_j\bigr) - R\bigl(\partial_{\bar j}, \partial_i\bigr),\\
R\bigl(\partial_i + \partial_{\bar i},\mathrm{i}\bigl(\partial_j-\partial_{\bar j}\bigr)\bigr) = \mathrm{i}\bigl(R\bigl(\partial_{\bar i}, \partial_j\bigr) +
R\bigl(\partial_{\bar j}, \partial_i\bigr)\bigr),\\
R\bigl(\partial_{i+n} + \partial_{\overline{i+n}},\partial_{j+n}+\partial_{\overline{j+n}}\bigr) = R\bigl(\partial_{\overline{i+n}}, \partial_{j+n}\bigr) - R\bigl(\partial_{\overline{j+n}}, \partial_{i+n}\bigr),\\
R\bigl(\partial_{i+n} + \partial_{\overline{i+n}},\mathrm{i}\bigl(\partial_{j+n}-\partial_{\overline{j+n}}\bigr)\bigr) = \mathrm{i}\bigl(R\bigl(\partial_{\overline{i+n}}, \partial_{j+n}\bigr) + R\bigl(\partial_{\overline{j+n}}, \partial_{i+n}\bigr)\bigr),\\
R\bigl(\partial_{i} + \partial_{\overline{i}},\partial_{j+n}+\partial_{\overline{j+n}}\bigr) = R\bigl(\partial_{\overline{i}}, \partial_{j+n}\bigr) - R\bigl(\partial_{\overline{j+n}}, \partial_{i}\bigr),\\
R\bigl(\partial_{i} + \partial_{\overline{i}},\mathrm{i}\bigl(\partial_{j+n}-\partial_{\overline{j+n}}\bigr)\bigr) = \mathrm{i}\bigl(R\bigl(\partial_{\overline{i}}, \partial_{j+n}\bigr) + R\bigl(\partial_{\overline{j+n}}, \partial_{i}\bigr)\bigr).
\end{gather*}
Inspection of the above formulas for these endomorphisms shows that the holonomy algebra $\mathfrak{hol}$ at the origin is represented
in the basis $(\partial_A)$ of the holomorphic tangent space ${U\!:=\!T_0^{1,0}\C^{4n}\cong \C^{4n}}$ by the algebra of $4n \times 4n$ matrices of the form
\[ \begin{pmatrix}0&0\\\ast &0\end{pmatrix},\]
where $\ast$ stands for an arbitrary skew-Hermitian $2n \times 2n$ matrix. We claim that the real vector space $U$ does not admit any
non-trivial decomposition into complementary subspaces invariant under $\mathfrak{hol}$. Let $U_1\subset U$ be a proper invariant subspace. It is either contained in
\[ U':= \mathrm{span}\{ \partial_{a+2n}\mid a=1,\ldots, 2n\},\]
which is precisely the kernel of $\mathfrak{hol}$, or has a nontrivial projection
to $U/U'$. Considering a line $L:=\R v$, where $v\in U_1 \setminus U'$, we see that in the latter case
\[ U'=\mathfrak{hol}\cdot L.\]
The reason is that the unitary group acts transitively on the unit sphere. By the holonomy invariance of $U_1$, this implies that $U'\subset U_1$, so that any
invariant subspace of $U$ is either contained in $U'$ or contains $U'$. This implies that the intersection
of any two invariant subspaces of complementary dimensions is non-trivial.
\eprf

\bs \label{transvec:prop} The group of transvections of the indecomposable Hermitian symmetric space $\bigl(\C^4,g\bigr)$ of Proposition~$\ref{kaehlerprop}$ is
{\rm 3}-step nilpotent with unipotent abelian isotropy. The isotropy algebra is of dimension $4n^2$.
\es

\bprf As a general fact, the Lie algebra $\mathfrak g$ of the transvection group of any symmetric space can be written as a direct sum of vector spaces,
\[ \mathfrak g= \mathfrak{hol} \+\mathfrak{m},\]
 where $\mathfrak m$ denotes the tangent space and $\mathfrak{hol}\subset \mathfrak{so}(\mathfrak m)$ the holonomy algebra of the symmetric space at the considered base point $o$ \big(in our case we take the origin in $\C^4$\big).
The Lie bracket is given by the conditions that $\mathfrak{hol}\subset \mathfrak g$ is a subalgebra (the Lie algebra of the isotropy group),
$\mathfrak{m}\subset \mathfrak{g}$ is a
$\mathfrak{hol}$-submodule with the adjoint action of $\mathfrak{hol}$ coinciding with the holonomy representation and, finally,
\[ [X,Y] = -R(X,Y)\in \mathfrak{hol}, \]
for all $X,Y\in \mathfrak{m}$. In the proof of Proposition~\ref{kaehlerprop}
we showed that $\mathfrak{hol}$ is represented by commuting lower triangular
matrices. Hence the isotropy algebra is abelian and unipotent. \big(Its dimension is clearly $\dim \mathfrak{hol} = 4n^2$, since
we saw that the vector space $\mathfrak{hol}$ is isomorphic to
the space of skew-Hermitian $2n \times 2n$ matrices.\big) We also see that
$[\mathfrak g, \mathfrak g] = \mathfrak{hol} + U'$, where $U'=\C^{2n}\subset \mathfrak{m}=\C^{4n}$ was defined in the proof of Proposition~\ref{kaehlerprop}.
Since $\mathfrak{hol}$ is abelian and annihilates $U'$, we have that $[\mathfrak g,[\mathfrak g, \mathfrak g]]= [\mathfrak{m}, \mathfrak{hol} + U']$. Using that $R(\mathfrak{m}, U') =0$, we arrive at \[ [\mathfrak g,[\mathfrak g, \mathfrak g]]= [\mathfrak{m}, \mathfrak{hol}] = U',\]
which proves that $\mathfrak g$ is 3-step nilpotent, since $[\mathfrak g, U'] =0$.
\eprf

As a consequence of $g$ being locally symmetric, the conformal group of the metric $g$ on $\rr^{8n}$ is contained in the group of homotheties,
\[H=\bigl\{ \phi\in \mathrm{Diff}\bigl(\R^{8n}\bigr)\mid \exists\ \lambda\in \rr\colon \psi^*g=\e^{2\lambda}g\bigr\}.\]
This is due to a result in \cite[Proposition 2.1]{CahenKerbrat77} that states that a conformal transformation between open sets in a semi-Riemannian manifold that is not conformally flat but has parallel Weyl tensor is a homothety. Since $g$ is locally symmetric it must also have parallel Weyl tensor, so that the result follows. Hence, we have the following.

\begin{Proposition}
The conformal group of $\bigl(\R^{8n},g\bigr)$, where $g$ is the metric in \eqref{metric} defined by the K\"{a}hler potential \eqref{potential} is equal to
\[\R\ltimes \Iso\bigl(\R^{8n},g\bigr),
\]
where the $\R$-factor is spanned by a one-parameter group of homotheties $\{\phi_s\mid s\in \R\}$.
Moreover, $g$~admits an abelian $2$-real-parameter family of homothetic transformations induced by the following diagonal linear maps of $\C^{4n}$,
\belabel{chomoth}\phi_{a,b}:=
\begin{pmatrix} \e^{a}\1_{n} &0&0&0\\ 0& \e^{b} \1_{n}&0&0 \\
0&0& \e^{a+2b} \1_{n}&0\\
0&0&0&\e^{2a+b}\1_{n}
\end{pmatrix},\qquad a,b\in \R,\end{equation}
that fix the origin $($as any linear map$)$ and satisfy $\phi_{a,b}^*g=\e^{2(a+b)}g$.

\end{Proposition}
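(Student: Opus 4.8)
The plan is to combine the containment already established above with an explicit construction of proper homotheties. Since $g$ is locally symmetric and not conformally flat, the cited result of Cahen--Kerbrat gives $\mathrm{Conf}\bigl(\R^{8n},g\bigr)\subseteq H$, and as every homothety is conformal the reverse inclusion is trivial, so $\mathrm{Conf}\bigl(\R^{8n},g\bigr)=H$. To identify $H$ with a semidirect product I would introduce the \emph{homothety character} $\chi\colon H\to(\R,+)$ sending a homothety $\phi$ with $\phi^*g=\e^{2\lambda}g$ to $\lambda$; this is a homomorphism because $(\phi\circ\psi)^*g=\psi^*\phi^*g$ and the target is abelian, and its kernel is exactly $\Iso\bigl(\R^{8n},g\bigr)$. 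It then remains only to exhibit a one-parameter subgroup of $H$ on which $\chi$ restricts to an isomorphism onto $\R$: this simultaneously shows $\chi$ is surjective and splits the sequence $1\to\Iso\bigl(\R^{8n},g\bigr)\to H\xrightarrow{\chi}\R\to 1$, yielding $H\cong\R\ltimes\Iso\bigl(\R^{8n},g\bigr)$.

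The technical heart is verifying that the diagonal maps $\phi_{a,b}$ of \eqref{chomoth} satisfy $\phi_{a,b}^*g=\e^{2(a+b)}g$, and I would read this off the explicit form \eqref{metriclong} of $g$. The map $\phi_{a,b}$ rescales the four coordinate blocks $\bigl(z^1,\dots,z^n\bigr)$, $\bigl(z^{n+1},\dots,z^{2n}\bigr)$, $\bigl(z^{2n+1},\dots,z^{3n}\bigr)$, $\bigl(z^{3n+1},\dots,z^{4n}\bigr)$ by $\e^{a},\e^{b},\e^{a+2b},\e^{2a+b}$ respectively, hence acts on each differential $\d z^A$ by the same factor. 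The term $\delta_{ab}\bigl(\d z^a\d\ol{z}^{b+2n}+\d z^{a+2n}\d\ol{z}^{b}\bigr)$ pairs block $1$ with block $3$ and block $2$ with block $4$, scaling by $\e^{a+(a+2b)}=\e^{2(a+b)}$ and $\e^{b+(2a+b)}=\e^{2(a+b)}$; the term $\sigma_{ij}\,\d\bigl(z^iz^{j+n}\bigr)\,\overline{\sigma_{k\ell}\,\d\bigl(z^kz^{\ell+n}\bigr)}$ pairs blocks $1,2$ with their conjugates and scales by $\e^{2(a+b)}$. Every term thus scales by the common factor $\e^{2(a+b)}$, giving the claim. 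It is worth remarking that the exponents in \eqref{chomoth} are forced rather than guessed: demanding that a diagonal map with block exponents $e_1,e_2,e_3,e_4$ rescale $g$ uniformly imposes $e_1+e_3=e_2+e_4=2(e_1+e_2)$, whose solutions form precisely the two-parameter family $(e_1,e_2,e_1+2e_2,2e_1+e_2)$, which is the source of the abelian $2$-real-parameter family.

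To conclude, observe that $(a,b)\mapsto\phi_{a,b}$ is a homomorphism from $(\R^2,+)$ into $\GL(4n,\C)$: composing two diagonal exponential maps multiplies corresponding entries, so $\phi_{a,b}\circ\phi_{a',b'}=\phi_{a+a',b+b'}$, and in particular the family is abelian and fixes the origin. Setting $\phi_s:=\phi_{s,0}$ produces a one-parameter group of homotheties with $\phi_s^*g=\e^{2s}g$, so $\chi(\phi_s)=s$ and $s\mapsto\phi_s$ splits $\chi$. Since conjugation by $\phi_s$ preserves isometries (if $\psi^*g=g$ then $(\phi_s\psi\phi_s^{-1})^*g=g$), the semidirect action is well defined, and we obtain $H=\{\phi_s\mid s\in\R\}\ltimes\Iso\bigl(\R^{8n},g\bigr)\cong\R\ltimes\Iso\bigl(\R^{8n},g\bigr)$.

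I expect no serious obstacle once $\mathrm{Conf}=H$ is granted: the only genuinely computational step is the block-scaling check, which is routine given \eqref{metriclong}, and the semidirect-product structure is then elementary. If anything plays the role of the crux, it is the existence of a nontrivial $\R$-factor, i.e.\ of proper homotheties at all — which is exactly what the clever choice of exponents in \eqref{chomoth} secures.
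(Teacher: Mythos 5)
Your proposal is correct and takes essentially the same approach as the paper: the Cahen--Kerbrat containment $\mathrm{Conf}\bigl(\R^{8n},g\bigr)\subseteq H$ established just before the proposition, combined with the exhibition of explicit diagonal homotheties, with the semidirect splitting (which the paper leaves implicit) spelled out via the homothety character. The only cosmetic difference is that you verify $\phi_{a,b}^*g=\e^{2(a+b)}g$ directly for the whole family by block scaling (and derive the exponents $(a,b,a+2b,2a+b)$ from the uniform-scaling constraint), whereas the paper factors $\phi_{a,b}=\phi_{\frac{a+b}{2}}\circ\psi_{\frac{a-b}{2}}$ as a single homothety $\phi_s$ with $\phi_s^*g=\e^{4s}g$ composed with a one-parameter isometry $\psi_t$.
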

\begin{proof}
We begin by showing that the metric $g$ admits a homothety that is not an isometry.
Indeed, for each $s\in \R$, the linear diagonal transformation
\[\phi_s:=\begin{pmatrix} \e^{s}\1_{2n} &0\\ 0& \e^{3s}\1_{2n}\end{pmatrix}\]
of $\C^{4n}$ induces a homothety of $g$ with $\phi_s^*g=\e^{4s}g$.
Composing the homothety $\phi_s$ with the isometry induced by
\[\psi_t:=\begin{pmatrix} \e^{t}\1_{n} &0&0&0\\ 0& \e^{-t} \1_{n}&0&0 \\
0&0& \e^{-t} \1_{n}&0\\
0&0&0&\e^{t}\1_{n}
\end{pmatrix},\]
we obtain the {two-parameter} family of homotheties in the proposition as
\[\phi_{a,b}= \phi_{\tfrac{a+b}{2}} \circ \psi_{\tfrac{a-b}{2}}. \]
This completes the proof.
\end{proof}

We will now construct a compact conformal manifold with essential conformal transformations.

\btheo\label{main:thm}
Let $\tM=\rr^{8n}\setminus\{0\}$ and $g$ the K\"ahler metric on $\tM$ defined by the potential $f$ in~\eqref{potential} for a given non-zero symmetric matrix $(\sigma_{ij})$.
Let $a,b>0$ be fixed positive real numbers and $\Gamma_{a,b} $ be the cyclic group of {holomorphic} homotheties of $(\tM,g)$ generated by $\phi_{a,b}$ in~\eqref{chomoth}. Then
$M_{a,b}=\tM/\Gamma_{a,b}$ is a compact manifold diffeomorphic to $\SS^1\times \SS^{8n-1}$ and carries a $($non-flat$)$ conformal structure of signature $(4n,4n)$ that is locally conformal to a Ricci-flat K\"ahler metric and has essential conformal transformations.
$($The induced integrable complex structure on the quotient manifold preserves the induced conformal structure.$)$
\etheo
\bprf

Since $a$ and $b$ are positive, $M_{a,b}=\tM/\Gamma_{a,b}$ is a compact manifold that is diffeomorphic to $\SS^1\times \SS^{8n-1}$.
In fact, a fundamental domain diffeomorphic to $(0,1)\times \SS^{8n-1}$ is given by
\[ \mathcal D = \Biggl\{ (x_1,x_2,x_3,x_4) \in \mathbb{R}^{8n} = \bigl(\mathbb{R}^{2n}\bigr)^4 \mid 1< \sum_{i=1}^4 \|x_i\|^2 \  \mbox{and}\
\sum_{i=1}^4 \frac{1}{\lambda_i^2} \|x_i\|^2<1\Biggr\},\]
 where $\lambda_1 = \e^a$, $\lambda_2 = \e^b$, $\lambda_3 = \e^{a+2b}$, $\lambda_4 = \e^{2a+b}$, and $\|\, \cdot\,\|$ is the Euclidean norm on $\R^{2n}$.
 Note that this is the region enclosed by the standard sphere and an ellipsoid. Identifying the two
 boundary components of $\mathcal D$ (i.e., the sphere and the ellipsoid) via the diffeomorphism induced by $\phi_{a,b}$ yields a diffeomorphism $M_{a,b} \cong \SS^1\times \SS^{8n-1}$.

{As} $\Gamma_{a,b}$ is a group of homotheties, the metric $g$ defines a conformal structure $\mathbf{c}$ on {$M_{a,b}$.} This conformal structure is locally conformally Ricci-flat K\"ahler and not conformally flat.

For $t\in \rr$, consider the homotheties
\[\phi_{0,t}=
\begin{pmatrix} \1_{n} &0&0&0\\ 0& \e^{t} \1_{n}&0&0 \\
0&0& \e^{2t} \1_{n}&0\\
0&0&0&\e^{t}\1_{n}
\end{pmatrix},
\]
of $\bigl(\tM,g\bigr)$ with fixed point set $\bigl\{\bigl(z^1,\ldots , z^n,0,\ldots, 0\bigr)\mid z^i \in \C^n\setminus \{0\}\bigr\}$. Since the $\phi_{0,t}$ have fixed points, they are essential on $\tM=\rr^{8n}\setminus\{0\}$, see \cite[Proposition 2.5]{LeistnerTeisseire22}.
In addition, for each~${t\in \rr}$, these homotheties commute with $\phi_{a,b}$ and hence descend to a conformal map $\psi_t$ on the quotient~$M_{a,b}$ and are essential on the quotient. Indeed, if there was a metric in the conformal class on~$M_{a,b}$ for which $\psi_t$ {was} an isometry, then this metric would lift to $\tM$ and have~$\phi_{0,t}$ as an isometry. This is a contradiction as the $\phi_{0,t}$ are essential on $\tM$.
\end{proof}

\bbem
The examples constructed in Theorem~\ref{main:thm} can be generalised to other dimensions and signatures $(4n+k,4n+\ell)$ by taking a metric on $\rr^{k+\ell+8n}$ as a product metric of $g$ and the Euclidean metric of signature $(k,\ell)$,
\smash{$g_{\mathrm{Euclid}}=-\sum_{i=1}^k \bigl(\d x^i\bigr)^2 + \sum_{j=k+1}^{k+\ell} \bigl(\d x^{j}\bigr)^2$},
 to obtain a~Ricci-flat metric. Moreover, when $k$ and $\ell$ are even, these examples are pseudo-K\"ahler.
The two-parameter family of homotheties on $\R^{8n}\times \R^{k,\ell}$ is then given by
\[ \phi_{a,b}\times \e^{a+b} \1_{k+\ell},\]
where $\phi_{a,b}$ is the homothety in~(\ref{chomoth}).
\ebem

\section[Metrics in signature (2n,2n) with essential conformal transformations]{Metrics in signature $\boldsymbol{(2n,2n)}$ \\ with essential conformal transformations}

In this section, we obtain compact conformal manifolds of signature $(2n,2n)$ with essential conformal transformations that are locally Ricci-flat and not conformally flat.
 For this, we replace in formula (\ref{metriclong}) for the K\"{a}hler metric the complex coordinates by real coordinates $x^i=z^i=\overline{z}^i$, for $i=1,\ldots, 4n$, on $\R^{4n}$, and we assume that the symmetric {$n\times n$ matrix $(\sigma_{ij})$ is real.} We use the same index conventions as in the previous section.
On $\R^{4n}$, we obtain the metric
\begin{equation}\label{metric4n}
g={2\delta_{ab}} \d x^a \d x^{b+2n}+\bigl( \sigma_{ij} \d\bigl(x^{i}x^{j+n}\bigr)\bigr)^2.
\end{equation}
Writing out the metric as
\[
g={2\delta_{ab}} \d x^a \d x^{b+2n}+
\sigma_{ij}\sigma_{k\ell} \bigl( x^{i}x^{k} \d x^{j+n} \d x^{\ell+n}
+ 2x^{i}x^{k+n} \d x^{j+n} \d x^{\ell}
+x^{i+n}x^{k+n} \d x^{j} \d x^{\ell}\bigr),
\]
one sees that the {$4n \times 4n$ matrix $(g_{AB})$} and its inverse are of the form
\[{(g_{AB})= \begin{pmatrix}
g_{ab} & \1_{2n}
\\
\1_{2n}
 &0\\
\end{pmatrix},\qquad \bigl(g^{AB}\bigr) =
\begin{pmatrix}
0& \1_{2n}
\\
\1_{2n}
 &-g_{ab} \\
\end{pmatrix},}\]
with {the coefficients $g_{ab}=g(\partial_a,\partial_b)$ determined by the above expansion of $g$.}
\begin{Remark}
It is important to note that the metric (\ref{metric4n}) is {\em not} the Hessian metric that is obtained from restricting the K\"{a}hler potential (\ref{potential}) to the real subspace $x^i=z^i=\overline{z}^i$ and by assuming $\sigma_{ij}$ to be real.
Indeed, the K\"{a}hler potential, when restricted to $z^i=\overline{z}^i$, becomes
\begin{align*}
f\bigl(x^1,\ldots , x^{4n}\bigr)
&{}=
\delta_{ab} x^a{x}^{b+2n}+
\bigl(\sigma\bigl(x^1,\ldots, x^{2n}\bigr)\bigr)^2
\nonumber\\
&{}=
\delta_{ab} x^a{x}^{b+2n}
+
\sigma_{ij}{\sigma}_{k\ell}
 \bigl(
 x^{i}x^k x^{j+n}{x}^{\ell+n}
 \bigr).
\end{align*}
The non-vanishing coefficients of the Hessian metric $h_{AB}=\partial_A\partial_B f$ defined by these coordinates and $f$ are
\begin{gather*}
h_{a\, b+2n} = h_{a+2n\, b} = \delta_{ab} { = g_{a\, b+2n} = g_{a+2n\, b}},
\\
h_{j\ell} =
2\sigma_{ij}{\sigma}_{k\ell}\,
 x^{i+n}{x}^{k+n} { = 2g_{j\ell}},
\\
h_{j\, \ell+n} =
2\left(\sigma_{j\ell}\sigma_{ik}+ \sigma_{ij}{\sigma}_{k\ell}\right)
 x^{i+n}{x}^{k} = h_{\ell+n j},
\\
h_{j+n\, \ell+n} = 2
\sigma_{ij}{\sigma}_{k\ell}\,
 x^{i}{x}^{k} { = 2g_{j+n\, \ell+n}.}
\end{gather*}
Note that the coefficients of the type $h_{j\, \ell+n}$ are (in general) not proportional to
$g_{j\, \ell+n}= \sigma_{ij}{\sigma}_{k\ell}
 x^{i+n}{x}^{k}$.
A computation then shows that the Hessian metric defined by {$(h_{AB})$} is in fact flat, whereas the metric $g$ in (\ref{metric4n}) is {(in general)} not, as we will see in the following.
\end{Remark}
Returning to the metric $g$ in (\ref{metric4n}), we note that the vector fields $\del_{2n+a}$, $a=1,\ldots , 2n$, are parallel and null, so for the Christoffel symbols we have
\[ \Gamma^A_{2n+a\,B}=0,\qquad \text{and}\qquad \Gamma^{c}_{AB} =0.\]
{Similar} computations as before show that
\[\Gamma^{A}_{i+n\ j+n}=\Gamma^{A}_{i j}=0,\]
and that the only non-vanishing Christoffel symbols are
 \[
 \Gamma^{k+2n}_{i\, j+n}
 =\frac{1}{2}( \partial_ig_{j+n\ k}+\partial_{j+n}g_{ik}-\partial_kg_{i\, j+n})
 =
 \sigma_{ij}\sigma_{k\ell}x^{\ell+n},
\]
and
\[
 \Gamma^{k+3n}_{i\, j+n}
 =
 \sigma_{ij}\sigma_{k\ell}x^{\ell}.
\]
To summarise, for all $b\in \{1, \ldots, 2n\}$ and $A,B\in \{1, \ldots, 4n\}$, we have
\begin{equation}
\label{LC1a}
\nabla_{\partial_A}\partial_{b+2n}=0,\qquad
\nabla_{\partial_A}\partial_b \in \Gamma (\mathrm{span} (\partial_{a+2n})_{a=1,\ldots, 2n}).\end{equation}
The curvature tensor is determined by the following equations:
\begin{alignat*}{3}
&\del_{2n+a}\hook R = 0,&& &
\\
&R(\del_{i+n},\del_{j+n})\del_{k} =
2\sum_{\ell}\sigma_{\ell [i}\sigma_{j]k}\del_{\ell+2n},
\qquad&& R(\del_{i+n},\del_{j+n})\del_{k+n} = 0,&
\\
&R(\del_{i},\del_{j})\del_{k} = 0,\qquad&& R(\del_{i},\del_{j})\del_{k+n} = 2\sum_{\ell}\sigma_{\ell [i}\sigma_{j]k}\del_{\ell+3n},&
\\
&R(\del_{i},\del_{j+n})\del_{k} = \sum_{\ell}\sigma_{\ell i}\sigma_{jk}\del_{\ell+3n},\qquad&& R(\del_{i},\del_{j+n})\del_{k+n} = -\sum_{\ell}\sigma_{\ell j}\sigma_{ik}\del_{\ell+2n}.&
\end{alignat*}
Here the square brackets denote skew-symmetrisation, $\sigma_{\ell [i}\sigma_{j]k}=\tfrac{1}{2} \left(\sigma_{\ell i}\sigma_{jk} - \sigma_{\ell j}\sigma_{ik}\right)$.
With this information at hand, we obtain the following proposition.
\begin{Proposition}\label{realex:prop}
Let $(\sigma_{ij})$ be any non-degenerate real symmetric $n\times n$ matrix. The metric~$g$ in~\eqref{metric4n} on $\rr^{4n}$ is locally symmetric, indecomposable, Ricci-flat, but not flat and hence not conformally flat. Moreover, the semi-Riemannian manifold $\bigl(\R^{4n},g\bigr)$ is an indecomposable symmetric space.
\end{Proposition}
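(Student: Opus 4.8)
The plan is to follow the proof of Proposition~\ref{kaehlerprop} step by step, replacing the Kähler-specific arguments by real ones. Local symmetry is obtained exactly as before: by~\eqref{LC1a} every $\nabla_{\partial_A}\partial_B$ lies in the parallel distribution $U':=\mathrm{span}(\partial_{a+2n})_{a=1,\dots,2n}$, while the components of $R$ listed above are constant in these coordinates; the two facts together give $\nabla R=0$. For Ricci-flatness I would argue structurally rather than via a determinant formula: inspection of the curvature components shows that every endomorphism $R(X,Y)$ maps $\R^{4n}$ into $U'$, and the condition $\partial_{2n+a}\hook R=0$ together with the pair symmetry of $R$ shows $R(X,Y)Z=0$ whenever $Z\in U'$; hence, for fixed $Y,Z$, the map $X\mapsto R(X,Y)Z$ is block strictly lower triangular with respect to $\R^{4n}=\mathrm{span}(\partial_a)\oplus U'$ and therefore trace-free, so $\mathrm{Ric}=0$. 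Global symmetry then follows from geodesic completeness, verified as in Proposition~\ref{kaehlerprop}: the geodesic equations give $\gamma^a$ affine and $\gamma^{\ell+2n},\gamma^{\ell+3n}$ governed by second-order equations with polynomial right-hand sides, so all geodesics are polynomial and defined for all $t\in\R$.

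Non-flatness is immediate from the explicit curvature, e.g.\ $R(\partial_i,\partial_{j+n})\partial_k=\sum_\ell\sigma_{\ell i}\sigma_{jk}\partial_{\ell+3n}\neq0$ once $(\sigma_{ij})$ is non-degenerate. Since a Ricci-flat metric has Weyl tensor equal to its full curvature tensor (and $\dim=4n\geq4$), conformal flatness would force $R=0$; as $R\neq0$, the metric is not conformally flat.

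For indecomposability I would work at the origin, where the quadratic coefficients $g_{ab}$ vanish and $g$ reduces to the neutral pairing $2\delta_{ab}\,\mathrm{d}x^a\,\mathrm{d}x^{b+2n}$; thus $B:=\mathrm{span}(\partial_a)$ and $U'$ are complementary Lagrangian (maximal totally null) subspaces dual to one another, and the holonomy algebra $\hol$ acts by the commuting square-zero endomorphisms $R(\partial_i,\partial_j)$, $R(\partial_{i+n},\partial_{j+n})$, $R(\partial_i,\partial_{j+n})$, each mapping $B$ into $U'$ and annihilating $U'$; in particular $\im\hol=\ker\hol=U'$. Indecomposability is equivalent to weak irreducibility of this representation, i.e.\ to the non-existence of a proper non-degenerate $\hol$-invariant subspace. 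The plan is to assume such a $V$ exists, note that $V^\perp$ is then also invariant and non-degenerate, and combine invariance ($\hol\cdot V\subseteq V\cap U'=:N$, and likewise for $V^\perp$) with non-degeneracy (which forces $N$ to pair non-degenerately with $\overline V:=\mathrm{pr}_B V$, hence $\dim N\le\dim\overline V$) to deduce $U'=N\oplus N'$ and $B=\overline V\oplus\overline{V^\perp}$ with $N=\hol\cdot\overline V$, $N'=\hol\cdot\overline{V^\perp}$ and $\dim N=\dim\overline V$.

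The main obstacle is the final step, ruling out such a proper splitting. In the Kähler case the analogous point was settled instantly by invoking transitivity of the unitary group on the sphere, which gives $\hol\cdot\R v=U'$ for every $v\notin U'$; this shortcut is \emph{not} available in the real setting, because the real holonomy is a proper $\sigma$-dependent subalgebra and the orbit $\hol\cdot v$ is in general only a proper subspace of $U'$ (for instance $\hol\cdot\partial_i$ omits one direction). Instead I expect to use the explicit cross-terms $R(\partial_i,\partial_{j+n})$, which couple the two halves of $B$ and $U'$ through all of $(\sigma_{ij})$, together with the non-degeneracy of $(\sigma_{ij})$, to show that a splitting $B=\overline V\oplus\overline{V^\perp}$ compatible with $U'=\hol\cdot\overline V\oplus\hol\cdot\overline{V^\perp}$ can only be trivial. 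This is a concrete but slightly delicate linear-algebra computation, and it is the real content of the indecomposability claim; the case $n=1$, where $\hol$ is one-dimensional and every proper invariant subspace is visibly totally null, is a useful guide.
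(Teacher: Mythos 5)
Your treatment of local symmetry, geodesic completeness, and non-(conformal-)flatness matches the paper's, and your trace argument for Ricci-flatness (every $R(X,Y)$ maps into $U'=\Span(\del_{a+2n})_{a=1,\dots,2n}$ and kills $U'$ by pair symmetry, so $X\mapsto R(X,Y)Z$ is strictly block-triangular, hence trace-free) is a valid, self-contained alternative to the paper's bare assertion that ``the formulas show'' Ricci-flatness. The genuine gap is exactly where you flag it: indecomposability, which is the real content of the proposition, is never proved --- you only say you ``expect'' a delicate linear-algebra computation with the cross-terms to work. You also misdiagnose the obstacle. The paper's first move, which you skip, is to diagonalise $(\sigma_{ij})$ by the real analogue of Lemma~\ref{diag:lem} (a transformation $Q\in\GL(n,\R)$ acting inside $\mathrm{O}(2n,2n)$); after this normalisation $\hol$ consists of \emph{all} matrices $\begin{pmatrix}0&0\\ A&0\end{pmatrix}$ with $A\in\so(2n)$ arbitrary, of dimension $n(2n-1)$, so the holonomy is \emph{not} a $\sigma$-dependent subalgebra --- the $\sigma$-dependence is absorbed by the coordinate change. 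What you did correctly spot is that $\hol\cdot v=\bar v^{\perp}$ is only a hyperplane of $U'$ (your example $\hol\cdot\del_i$ is right); but this codimension-one phenomenon is present in the K\"ahler case too, where the orbit of $\bar v\neq 0$ under the skew-Hermitian matrices is the real hyperplane $\{w \mid \mathrm{Re}\, h(w,\bar v)=0\}$ of $\C^{2n}$, so it is not a new difficulty of the real setting, and the ``simplified version'' the paper alludes to does not require anything $\sigma$-specific.

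Moreover, the final step you defer is short once you exploit the invariant orthogonal complement you already introduced. Identify $B=\Span(\del_a)$ and $U'$ with $\R^{2n}$ so that the metric $g_0$ at the origin satisfies $g_0(\del_a,\del_{b+2n})=\delta_{ab}$, and write $\bar v$ for the $B$-projection of $v$. Suppose $V$ is a proper nondegenerate $\hol$-invariant subspace; then $V^{\perp}$ is invariant (as $\hol\subset\so(g_0)$), nondegenerate and nonzero, and neither $V$ nor $V^{\perp}$ lies in the totally null $U'$, so choose $v\in V$, $w\in V^{\perp}$ with $\bar v\neq 0\neq\bar w$. For every $A\in\so(2n)$ invariance gives $Av=A\bar v\in V$, hence $0=g_0(A\bar v,w)=\langle A\bar v,\bar w\rangle$; since $\{A\bar v \mid A\in\so(2n)\}=\bar v^{\perp}$, this forces $\bar w\in\R\bar v$, and symmetrically $\bar v'\in\R\bar w=\R\bar v$ for every $v'\in V$. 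Thus the projections of $V$ and $V^{\perp}$ to $B$ lie in a single line, contradicting the fact that they span $B\cong\R^{2n}$ because $V+V^{\perp}=\R^{4n}$. Note this argument needs no case distinction, whereas your planned route through $N=V\cap U'$ and dimension counts still leaves the $n=1$ case, which you explicitly hand-wave, unresolved.
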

\begin{proof}
With $R_{ABCD}$ constant and relations (\ref{LC1a}), the curvature tensor is parallel, so that $g$ is locally symmetric. The formulas for the curvature tensor also show that $g$ is Ricci-flat, but not flat. Hence, the metric is not conformally flat. In order to
analyse the holonomy algebra we can assume that $(\sigma_{ij})$ is diagonal. In fact, using a linear transformation
in $\GL(2n,\R) \subset \mathrm{O}(2n,2n)$, we can diagonalise $(\sigma_{ij})$, as we did in Lemma \ref{diag:lem} for the
Hermitian symmetric examples. Using the diagonal form, it is easy to check that the elements of $\hol(g)=\Span (R(\del_A,\del_B))_{{A,B=1,\ldots ,4n}}$ are of the form
\[\begin{pmatrix} 0&0 \\ A&0\end{pmatrix},\]
where $A$ is an arbitrary skew-symmetric $2n\times 2n$ matrix. This easily implies the indecomposability (by a simplified version of the argument used in the proof of Proposition~\ref{kaehlerprop}).

The proof that
 $\bigl(\R^{4n},g\bigr)$ is geodesically complete and hence a globally symmetric space is analogous to the proof for Proposition \ref{kaehlerprop}.
\end{proof}

Results analogous to the ones stated in Proposition~\ref{transvec:prop} hold for the indecomposable symmetric spaces
of Proposition~\ref{realex:prop}. In fact, the symmetric space is {again} determined by the general theory in terms of the algebra of (infinitesimal) transvections
\[ {\g=\mathfrak{hol} +\m},\]
where ${\mathfrak{hol}} =\Span \{ R(\del_a,\del_b)\}_{{a,b=1,\ldots, 2n}}$ is an abelian unipotent subalgebra of $\so(2n,2n)$
of dimension $n(2n-1)$ and $\m=\mathrm{span}(\del_{A})_{A=1,\ldots 4n}{=T_0\R^{4n}=\R^{4n}}$ with
the coordinate vector fields evaluated at the origin). As discussed above, the restriction of the Lie bracket to $\mathfrak m \times \mathfrak m \to \mathfrak{hol}$ is given by (minus) the curvature tensor. We see that $\a=\mathrm{span}(\del_{a+2n})_{a=1,\ldots 2n}\subset \m$ is the center of $\g$ and the derived algebra $\g' =[\mathfrak g , \mathfrak g] =\h+\a$ is abelian, so that $\g$ is 2-step solvable. Moreover, $[\mathfrak g , \mathfrak g'] = \a$ and $\mathfrak g$ is 3-step nilpotent. So we have proven the following
proposition.

\bs 
The group of transvections of the indecomposable symmetric space $\bigl(\R^{4n},g\bigr)$ of Proposition~$\ref{realex:prop}$ is {\rm 3}-step nilpotent with unipotent abelian isotropy. The isotropy algebra is of dimension $n(2n-1)$.
\es

As before, the linear maps in (\ref{chomoth}), this time acting on $\R^{4n}$, are homotheties of the metric~$g$ and define an essential conformal structure on the compact quotient of $\tM=\R^{4n}\setminus \{0\}$. Summarising, we obtain the
following theorem.
\btheo \label{mainthm2}Let $\widetilde{N}=\rr^{4n}\setminus\{0\}$ and $g$ the semi-Riemannian metric on $\widetilde{N}$ defined by \eqref{metric4n} for a~given non-zero real symmetric matrix $(\sigma_{ij})$.
Let $a,b>0$ be fixed positive real numbers and~$\Gamma_{a,b} $~be the cyclic group of homotheties of $(\widetilde{N},g)$ generated by $\phi_{a,b}$ in \eqref{chomoth}. Then
${N_{a,b}=\widetilde{N}/\Gamma_{a,b}}$ is a~compact manifold diffeomorphic to $\SS^1\times \SS^{4n-1}$ and carries a $($non-flat$)$ conformal structure of signature $(2n,2n)$ that is locally conformal to a Ricci-flat metric and has essential conformal transformations.
\etheo

Again, these examples can be generalised to signatures $(2n+k,2n+\ell)$ by taking the semi-Riemannian product with semi-Euclidean space $\R^{k,\ell}$.

For $n=1$, the symmetric spaces in Proposition~\ref{realex:prop} are isometric to the examples in \cite{frances12} of metrics in signature $(2,2)$ that yield compact quotients with essential conformal transformations. These examples are given by the metric
\begin{equation}\label{Francesmetric} \hat g= 2\d y^1 \d y^3 + 2\d y^2 \d y^4 + {\bigl(y^2\bigr)^2}\bigl(\d y^1\bigr)^2,\end{equation}
defining the structure of a symmetric space on $\R^4$. In fact, the geodesic completeness
can be proven as in the proof of Proposition~\ref{kaehlerprop} by computing the Christoffel symbols and writing out the geodesic equation
while $\nabla R=0$ is then obtained by computing also the curvature tensor.
The non-vanishing terms of the curvature tensor are
\[
R\biggl(\frac{\del}{\del y_{1}},\frac{\del}{\del y_{2}}\biggr)\frac{\del}{\del y_{1}}=\tfrac{\del}{\del y_{4}},\qquad R\biggl(\frac{\del}{\del y_{1}},\frac{\del}{\del y_{2}}\biggr)\frac{\del}{\del y_{2}}={-\tfrac{\del}{\del y_{3}}}.
\]
This is exactly the curvature tensor of the metric $g$ in (\ref{metric4n}) as computed above when
 $n=1$ and $\sigma_{ij}=1$. Hence with both spaces being symmetric and an isometry of the tangent spaces
 mapping the curvature tensors (at the origin) to each other, we conclude that the spaces are isometric.

\section{Conclusion and an open problem}
The main result of this article was the construction of counterexamples to the semi-Riemannian Lichnerowicz conjecture
that are locally conformally K\"ahler in all dimensions $\ge 8$. The index of an indefinite K\"ahler manifold is even and therefore
at least $2$. Our locally conformally K\"ahlerian counterexamples cover the cases of index $4$ and higher. It would be interesting to
decide if the Lichnerowicz conjecture holds for locally conformally K\"ahler manifolds of index $2$. That would be a natural analogue of the
Lorentzian Lichnerowicz conjecture in the presence of a complex structure.

\begin{Remark} The examples of symmetric spaces of signature $(2,2)$ obtained by specializing Proposition~\ref{realex:prop} to the case $n=1$ happen to be Hermitian symmetric but
the $2$-parametric group of homotheties (\ref{chomoth}) acting on $\R^{4}$, which was used to construct the compact quotients~$N_{a,b}$ of~$\R^4\setminus \{ 0\}$
and the essential conformal transformation,
does not preserve the complex structure. At the origin, the complex structure $J$ of the Hermitian symmetric space is given by
\[ J\frac{\partial}{\partial y^1} = \frac{\partial}{\partial y^2},\qquad J\frac{\partial}{\partial y^2} = -\frac{\partial}{\partial y^1},\qquad J\frac{\partial}{\partial y^3} = \frac{\partial}{\partial y^4},\qquad J\frac{\partial}{\partial y^4} = -\frac{\partial}{\partial y^3},\]
in the parametrization
\eqref{Francesmetric}. It is manifestly Hermitian with respect to $\hat{g}_0=2\d y^1 \d y^3 + 2\d y^2 \d y^4$ and invariant
under $\mathfrak{hol}$. This proves that the symmetric space is Hermitian symmetric.
\end{Remark}

\subsection*{Acknowledgements}
This work was supported by
the Australian Research
Council via the grant DP190102360 and
 by the German Science Foundation (DFG) under
the Research Training Group 1670 ``Mathematics inspired by String Theory'' and under
Germany's Excellence Strategy -- EXC 2121 ``Quantum Universe'' -- 390833306.
V.C.~is grateful to the University of Adelaide for its hospitality and support. V.C and T.L.
thank Australia's international and residential mathematical research institute MATRIX, where the work on the paper has started and was finalised.

\pdfbookmark[1]{References}{ref}
\LastPageEnding

\end{document}